\numberwithin{equation}{section}
\numberwithin{definition}{section}
\numberwithin{theorem}{section}
\numberwithin{remark}{section}
\numberwithin{example}{section}
\DeclareSymbolFont{EULERSCRIPT}{U}{eus}{m}{n}
\DeclareSymbolFontAlphabet\MATHCAL{EULERSCRIPT}
\DeclareSymbolFont{AMSB}{U}{msb}{m}{n}
\DeclareSymbolFontAlphabet{\MATHBB}{AMSB}
\newcommand{\MATHSCR}{\mathcal}
\newcommand{\TEXT}{\textrm}
\newcommand{\MATHRM}{\mathrm}
\def\RR{\MATHBB{R}}       
\def\CC{\MATHBB{C}}       
\def\Rd{\MATHBB{R}^d}     
\def\vx{\mathbf{x}}   
\def\vy{\mathbf{y}}   
\def\vc{\mathbf{c}}
\def\NN{\MATHBB{N}}       
\def\CC{\MATHBB{C}}       
\def\ud{\mathrm{d}}       
\def\Schwartz{\MATHCAL{S}}        
\def\SI{\MATHCAL{SI}}  
\def\Continue{\mathrm{C}(\MATHBB{R}^d)}           
\def\ContinueInf{\mathrm{C}^{\infty}(\MATHBB{R}^d)}        
\def\ContinueInfComp{\mathrm{C}_0^{\infty}(\MATHBB{R}^d)}  
\def\Lloc{\mathrm{L}_1^{loc}(\MATHBB{R}^d)} 
\def\Ltwo{\mathrm{L}_2(\MATHBB{R}^d)}       
\def\Lone{\mathrm{L}_1(\MATHBB{R}^d)}     
\def\NativePhi{\MATHCAL{N}_{\Phi}(\MATHBB{R}^d)}  
\def\NativeG{\MATHCAL{N}_{G}(\MATHBB{R}^d)}       
\newcommand{\norm}[1]{\left\|#1\right\|}  
\newcommand{\abs}[1]{\left|#1\right|}   
\newcommand{\Matlab}{{\sc Matlab}}
\def\vP{\mathbf{P}}            
\def\vQ{\mathbf{Q}}            
\def\vB{\mathbf{B}}            
\def\FvP{\hat{\mathbf{p}}}           
\def\Hp{\mathrm{H}_{\mathbf{P}}(\MATHBB{R}^d)} 
\def\Leb{\mathrm{L}}
\def\Hil{\MATHSCR{H}}
\def\Cont{\mathrm{C}}
\def\Native{\MATHCAL{N}}
\def\HP{\mathrm{H}_{\mathbf{P}}}
\def\Real{\mathrm{Re}}
\def\RealLloc{\mathrm{Re}(\mathrm{L}_{1}^{loc}(\MATHBB{R}^d))}  
\def\RealContinue{\mathrm{Re}(\mathrm{C}(\MATHBB{R}^d))}   
\def\RealLtwo{\mathrm{Re}(\mathrm{L}_2(\MATHBB{R}^d))}       
\def\RealLone{\mathrm{Re}(\mathrm{L}_1(\MATHBB{R}^d))}       
\begin{document}

\title*{Reproducing Kernels of Generalized Sobolev Spaces via a Green Function Approach with Differential Operators}
\titlerunning{Reproducing Kernels via a Green Function Approach}

\author{Qi Ye}

\institute{Qi Ye \at Department of Applied Mathematics, Illinois
Institute of Technology \email{qye3@iit.edu}}



\maketitle

\abstract{ In this paper we introduce a generalization of the
classical $\Leb_2(\Rd)$-based Sobolev spaces with the help of a
vector differential operator $\vP$ which consists of finitely or
countably many differential operators $P_n$ which themselves are
linear combinations of distributional derivatives. We find that certain proper full-space
Green functions $G$ with respect to $L=\vP^{\ast T}\vP$ are positive
definite functions. Here we ensure that the vector distributional
adjoint operator $\vP^{\ast}$ of $\vP$ is well-defined in the
distributional sense. We then provide sufficient conditions under
which our generalized Sobolev space will become a reproducing-kernel
Hilbert space whose reproducing kernel can be computed via the
associated Green function $G$. As an application of this theoretical
framework we use $G$ to construct multivariate minimum-norm
interpolants $s_{f,X}$ to data sampled from a generalized Sobolev
function $f$ on $X$. Among other examples we show the
reproducing-kernel Hilbert space of the Gaussian function is
equivalent to a generalized Sobolev space.}

\textbf{Mathematics Subject Classification (2000)}: Primary 41A30,
65D05; Secondary 34B27, 41A63, 46E22, 46E35



\section{Introduction}

There is a steadily increasing body of literature on radial basis
function and other kernel-based approximation methods in such
application areas as scattered data approximation, statistical or
machine learning and numerical solutions of partial differential
equations. Some recent books and survey papers on kernel methods are
\cite{BerThAg04, Buh03, Fas07, KBU02ab, SchWen06, SchSmo02, Wah90,
Wen05}. Given a set of data sites $X$ and associated values $Y$ sampled from a continuous function
$f$, we use a positive definite function $\Phi$ to set up an
interpolant $s_{f,X}$ to approximate $f$. It is well-known that within the reproducing-kernel Hilbert space framework one can also discuss the error analysis and optimal recovery of the
interpolation process whenever $f$ belongs to the related
reproducing-kernel Hilbert space $\NativePhi$
(see~Section~\ref{s:PD-RKHS}~and~\cite{Wen05}). However, there are still
a number of difficulties and challenges associated with this method. Two important
questions still in need of a satisfactory answer are:
\emph{What kind of functions belong to a given reproducing-kernel
Hilbert space?} and \emph{Which kernel function should we utilize for a
particular application?}

In the survey paper \cite{SchWen06} the authors give the reader some
guidance for dealing with this problem. Others -- especially
statisticians -- will attempt to find the ``best'' kernel function
by selecting an ``optimal'' scaling parameter. Here such techniques
as cross-validation and maximum likelihood estimation are often
mentioned (see, e.g. \cite{Ste99, Wah90}). The recent
paper~\cite{DevRon10} derives a new error bound of the approximation
by the fundamental functions (Green functions) using scattered
shifts.

In \cite{Sch08} the author noted that many kernels can be embedded into the
classical Sobolev spaces, but that is was not so clear what the difference between the various kernels was.
As a possible approach to this, he suggested that one might scale the classical Sobolev space with the help of a scaling parameter integrated into the kernel function. Once this is done, we can choose the
``best'' scaling parameter to set up the interpolant that minimizes the
norm of the error functional for a given set of scattered data.
Examples~\ref{ex:Sobolevspline}
and~\ref{ex:Matern} will demonstrate that the classical Sobolev space can be
reconstructed by different inner-products that allow us to balance
the required derivatives by selecting various scaling parameters. The related
Green functions are the Sobolev splines (Mat\'ern functions) with
appropriate scaling parameters (see \cite{Ste99}). Finally,
Example~\ref{ex:Gaussian} shows that the reproducing-kernel Hilbert
space of the Gaussian function is also equivalent to a generalized
Sobolev space. This generalized Sobolev space has been applied in the context of
support vector machines and the study of motion coherence
(see~\cite{SSM98, YuiGrz88}).

However, in this paper, we view this problem in a slightly different
way. We hope to construct the reproducing kernel and the
reproducing-kernel Hilbert space with the help of countably many differential
operators $\left\{P_n\right\}_{n=1}^{\infty}$ which are themselves linear
combinations of distributional derivatives (defined in
Section~\ref{s:distributions}). Handling the vector differential
operator $\vP:=\left(P_1,\cdots,P_n,\cdots\right)^T$, we will
generalize the concept of classical real-valued $\Leb_2(\Rd)$-based
Sobolev space $\Hil^n(\Rd)$ to be a generalized Sobolev space $\Hp$
with a semi-inner product (see Definition~\ref{d:GenSolSp}).

In the following we use the notation $\Real(\MATHCAL{E})$ to denote
the collection of all real-valued functions of the function space
$\MATHCAL{E}$. For example, $\RealContinue$ expresses the collection
of all real-valued continuous functions on $\Rd$.
%
The real classical Sobolev space is usually defined as
\[
\Hil^n(\Rd) := \left\{ f \in\RealLtwo: D^{\alpha} f\in\Ltwo\TEXT{
for all }\abs{\alpha}\leq n,\alpha\in\NN^d_0\right\}
\]
with inner product
\[
(f,g)_{\Hil^n(\Rd)} := \sum_{\abs{\alpha}\leq n}(D^\alpha f,
\overline{D^\alpha g}), \quad{ }f,g\in\Hil^n(\Rd),
\]
where $(\cdot,\overline{~\cdot})$ is the standard $\Ltwo$-inner
product. Our real generalized Sobolev space will be of a very
similar form, namely
\[
\Hp:= \Big\{ f\in\RealLtwo:\left\{P_n
f\right\}_{n=1}^{\infty}\subseteq\Ltwo\TEXT{ and }
\sum_{n=1}^{\infty}\norm{P_nf}_{\Ltwo}^2<\infty \Big\}
\]
with the semi-inner product
\[
(f,g)_{\Hp}:=\sum_{n=1}^{\infty}(P_n f,\overline{P_n g}), \quad{
}f,g\in\Hp.
\]

We further wish to connect the reproducing kernel to a (full-space)
Green function $G$ with respect to some differential operator $L$,
i.e., $LG=\delta_0$. Since the Dirac delta function $\delta_0$ at
the origin is just a tempered distribution in the dual space of the
Schwartz space, the Green function should be regarded as a tempered
distribution as well. We find that $L$ can be computed by a vector
differential operator $\vP:=\left(P_1,\cdots,P_n\right)^T$ and its
distributional adjoint operator
$\vP^{\ast}:=\big(P_1^{\ast},\cdots,P_n^{\ast}\big)^T$, i.e.,
$L=\vP^{\ast T}\vP=\sum_{j=1}^nP_j^{\ast}P_j$. The definition of the
distributional adjoint operator is given in
Section~\ref{s:distributions}. Under some sufficient conditions, we
can prove that the Green function $G$ is a positive definite
function according to Theorem~\ref{t:GF-PD}. In that case the
reproducing-kernel Hilbert space $\NativeG$ related to $G$ is
well-defined in Section~\ref{s:PD-RKHS} and~\cite{Wen05}, and its
reproducing kernel $K$ on $\Rd$ is equal to
$K(\vx,\vy):=G(\vx-\vy)$. Moreover, Theorems~\ref{t:RKHS-Hp} shows
that $\NativeG$ is equivalent to $\Hp$ for some simple additional
conditions. In the proof, we use techniques of distributional
Fourier transforms defined in Section~\ref{s:DistFT}
and~\cite{SteWei75}, as well as the classical Fourier transform.
%


\section{Background}\label{s:background}

Given data sites $X=\{\vx_1,\cdots,\vx_N\}\subset\Rd$ (which we also
identify with the centers of our kernel functions below) and values
$Y=\{y_1,\cdots,y_N\}\subset\RR$ of a real-valued continuous function
$f$ on $X$, we wish to approximate this function $f$ by a linear
combination of translates of a positive definite (see
Section~\ref{s:PD}) function $\Phi$.

To this end we set up the interpolant in the form
\begin{equation}\label{interpolant}
s_{f,X}(\vx):=\sum_{j=1}^{N}c_j\Phi(\vx-\vx_j),
\quad{}\vx\in\Rd,
\end{equation}
and require it to satisfy the additional interpolation conditions
\begin{equation}\label{interpolation_conditions}
s_{f,X}(\vx_j)=y_j,           \quad{  }j=1,\ldots,N.
\end{equation}

The above system (\ref{interpolation_conditions}) is equivalent
to a uniquely solvable linear system
\[
\mathbf{A}_{\Phi,X}\vc=\mathbf{Y},
\]
where
$\mathbf{A}_{\Phi,X}:=\left(\Phi(\vx_j-\vx_k)\right)_{j,k=1}^{N,N}\in
\RR^{N\times N}$, $\vc:=(c_1,\cdots,c_N)^T$ and
$\mathbf{Y}:=(y_1,\cdots,y_N)^T$. All of the above is discussed in
detail in \cite[Chapter~6.1]{Wen05}.

\begin{example}\label{ex:unisobolevspline}
One of the best known examples that fits into this framework is the \emph{univariate Sobolev spline (Mat\'{e}rn)} interpolant
\[
s_{f,X}(\MATHRM{x}):=\sum_{j=1}^N c_j\Phi(\MATHRM{x}-\MATHRM{x}_j),\quad{}\Phi(\MATHRM{x}):=\frac{1}{2\sigma}\exp(-\sigma\abs{\MATHRM{x}}),
\quad{}\MATHRM{x}\in\RR,
\]
and $\sigma >0$ is called the \emph{Sobolev parameter}.

We can check that $\Phi$ is a positive definite function. Moreover,
if we define $\vP = (P_1,P_2)^T := (d/d \MATHRM{x}, \sigma I)^T$ and
$\vP^{\ast} = (P_1^{\ast},P_2^{\ast})^T := (-d/d \MATHRM{x}, \sigma I)^T$,
then $\Phi$ is the Green function with respect to $L = \vP^{\ast
T}\vP = -d^2/d \MATHRM{x}^2+\sigma^2I$.

Furthermore, the interpolant $s_{f,X}$ from
(\ref{interpolant})-(\ref{interpolation_conditions}) minimizes the
norm $\norm{\cdot}_{Sob}$ of all $f \in \Cont^1(\RR)\cap\RealLtwo$
with
\[
\norm{f}_{Sob}^2:= (\vP f,\overline{\vP f})_{Sob} =
\int_{\RR}\abs{\frac{d f}{d \MATHRM{x}}(\MATHRM{x})}^2\ud \MATHRM{x}+\int_{\RR}\sigma^2
\abs{f(\MATHRM{x})}^2\ud \MATHRM{x}<\infty
\]
subject to the constraints $s_{f,X}(\MATHRM{x}_i)=y_j$,
$j=1,\ldots,N$ (see~\cite{BerThAg04}).
\end{example}

As we will show in Section~\ref{s:RKHS-HS}, we can in general
construct a reproducing-kernel Hilbert space $\NativePhi$ from
a positive definite function $\Phi$ such that the interpolant $s_{f,X}$ is the best approximation
of the function $f$ in $\NativePhi$ fitting the values $Y$ on the data sites $X$.


\section{Positive Definite Functions and Reproducing-Kernel Hilbert
Spaces}\label{s:PD-RKHS}

Most of the material presented in this section can be found in the excellent monograph \cite{Wen05}. For the reader's convenience we repeat here what is essential to our discussion later on.

\subsection{Positive Definite Functions}\label{s:PD}

\begin{definition}[{\cite[Definition~6.1]{Wen05}}]
A continuous even function $\Phi:\Rd\to\RR$ is said to be a
\emph{positive definite function} if, for all $N\in\NN$, all
pairwise distinct centers $\vx_1,\ldots,\vx_N\in \Rd$, and all
$\vc=(c_1,\cdots,c_N)^T\in\RR^N\setminus \{0\}$, the quadratic form
\[
\sum_{j=1}^N\sum_{k=1}^{N}c_jc_k\Phi(\vx_j-\vx_k)>0.
\]
\end{definition}

\begin{theorem}[{\cite[Theorem~6.11]{Wen05}}]\label{t:PD-FT}
Suppose that $\Phi\in\RealContinue\cap\Lone$ is an even function and
its $\Lone$-Fourier transform is $\hat{\phi}$.
Then $\Phi$ is positive definite if and
only if $\Phi$ is bounded and $\hat{\phi}$ is nonnegative and nonvanishing.
\end{theorem}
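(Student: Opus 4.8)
The plan is to prove the two implications separately, the bridge in both directions being the identity that expresses each quadratic form $\sum_{j,k}c_jc_k\Phi(\vx_j-\vx_k)$ as an integral of $\hat\phi$ against a nonnegative weight; throughout I write ``Parseval'' loosely for the elementary multiplication formula $\int\hat f\,g=\int f\,\hat g$, which needs only that both factors be in $\Lone$, and I keep the normalizing constant of the Fourier transform implicit.

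\emph{Necessity.} Suppose $\Phi$ is positive definite. Boundedness is immediate from the definition: applying the positivity of the quadratic form with $N=2$, centres $\{\vx,0\}$, and coefficient vectors $(1,1)$ and $(1,-1)$, and using that $\Phi$ is even, gives $\Phi(0)\pm\Phi(\vx)>0$, so $\Phi(0)>0$ and $\abs{\Phi(\vx)}<\Phi(0)$ for every $\vx$; in particular $\Phi$ is bounded and $\Phi\not\equiv 0$. To get $\hat\phi\geq 0$ I would first upgrade the discrete inequality to
\[
\int_{\Rd}\int_{\Rd}\Phi(\vx-\vy)\,\psi(\vx)\,\overline{\psi(\vy)}\,\ud\vx\,\ud\vy\;\geq\;0,\qquad \psi\in\ContinueInfComp,
\]
by writing the left-hand side as the limit of Riemann sums over a fine partition of a cube containing $\mathrm{supp}\,\psi$: each sum equals $\sum_{j,k}\Phi(\vx_j-\vx_k)\,\psi(\vx_j)\,\overline{\psi(\vx_k)}$ times the squared cell volume, which is nonnegative even for complex $\psi$ because the evenness of $\Phi$ annihilates the imaginary part and leaves two nonnegative real quadratic forms, and the convergence is justified by uniform continuity of $\Phi$ on the (compact) set of occurring differences. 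Substituting $\vy\mapsto\vx-\vy$ rewrites the double integral as $\int_{\Rd}\Phi(\mathbf{z})\,h(\mathbf{z})\,\ud\mathbf{z}$, where $h$ is the autocorrelation of $\psi$; since $\hat h$ is a positive multiple of $\abs{\hat\psi}^2$, Parseval identifies this, up to a positive constant, with $\int_{\Rd}\hat\phi(\vxi)\,\abs{\hat\psi(\vxi)}^2\,\ud\vxi$ (here $\hat\phi$ is real because $\Phi$ is real and even, and continuous and bounded because $\Phi\in\Lone$). Choosing $\psi$ so that $\abs{\hat\psi}^2$ is an approximate identity at an arbitrary point $\vxi_0$ — e.g.\ a rescaled, modulated fixed bump — continuity of $\hat\phi$ forces $\hat\phi(\vxi_0)\geq 0$. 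Finally $\hat\phi$ is nonvanishing in the required sense, namely not identically zero, since $\Phi\not\equiv 0$.

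\emph{Sufficiency.} Conversely, assume $\Phi$ is bounded with $\hat\phi\geq 0$ and $\hat\phi\not\equiv 0$. From $\Phi\in\Lone$, continuity of $\Phi$ at the origin, and $\hat\phi\geq 0$ one deduces $\hat\phi\in\Lone$ by the classical regularisation argument: by Parseval $\int_{\Rd}\hat\phi(\vxi)\,e^{-\varepsilon\abs{\vxi}^2}\,\ud\vxi$ equals a fixed multiple of the integral of $\Phi$ against a Gaussian, which tends to a multiple of $\Phi(0)$ as $\varepsilon\to 0$, while monotone convergence forces the same integral to increase to $\int_{\Rd}\hat\phi(\vxi)\,\ud\vxi$, so the latter is finite. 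Hence Fourier inversion holds pointwise, $\Phi(\mathbf{z})=\int_{\Rd}\hat\phi(\vxi)\,e^{\mathrm{i}\mathbf{z}\cdot\vxi}\,\ud\vxi$ up to normalization, and substituting into the quadratic form gives
\[
\sum_{j=1}^{N}\sum_{k=1}^{N}c_jc_k\,\Phi(\vx_j-\vx_k)\;=\;\int_{\Rd}\hat\phi(\vxi)\,\Big|\sum_{j=1}^{N}c_j\,e^{\mathrm{i}\vx_j\cdot\vxi}\Big|^{2}\,\ud\vxi\;\geq\;0,
\]
again up to a positive constant. Strict positivity for pairwise distinct $\vx_j$ and $\vc\neq 0$ follows because $\vxi\mapsto\sum_j c_j e^{\mathrm{i}\vx_j\cdot\vxi}$ is a nonzero real-analytic function (distinct frequencies are linearly independent), so its zero set is Lebesgue-null, whereas $\{\hat\phi>0\}$ has positive measure ($\hat\phi$ being continuous, nonnegative and not identically zero); thus the integrand is strictly positive on a set of positive measure.

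I expect the main obstacle to be the necessity of $\hat\phi\geq 0$: one must pass carefully from the finite, real-coefficient quadratic-form inequality to the integral inequality — handling complex test functions in the Riemann sums and controlling the limit with only the \emph{local} uniform continuity of $\Phi$, since $\Phi$ need not be uniformly continuous on all of $\Rd$ — and then down to a pointwise statement about $\hat\phi$, all while keeping the Fourier/Parseval normalizations straight and remembering that at this stage $\hat\phi$ is known only to be bounded and continuous, not yet integrable. The sufficiency direction is comparatively routine once the lemma $\hat\phi\in\Lone$ is in hand.
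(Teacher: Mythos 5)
Your argument is correct. Note that the paper itself offers no proof of this statement --- it is quoted verbatim as Theorem~6.11 of Wendland's \emph{Scattered Data Approximation} --- and your proof is essentially the standard one given there: necessity via the passage from the discrete quadratic form to the integral inequality against test functions and an approximate identity in frequency, sufficiency via the Gaussian regularisation trick to get $\hat{\phi}\in\Lone$, Fourier inversion, and the observation that the zero set of a nontrivial exponential sum is Lebesgue--null while $\{\hat{\phi}>0\}$ has positive measure. The only point worth flagging is one you already handle correctly: ``nonvanishing'' here must be read as ``not identically zero,'' which is exactly what your two directions jointly establish.
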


%
%

\subsection{Reproducing-Kernel Hilbert
Spaces}\label{s:RKHS-HS}

\begin{definition}[{\cite[Definition~10.1]{Wen05}}]
Let $\mathrm{H}(\Rd)$ be a real Hilbert space of functions
$f:\Rd\rightarrow\RR$. A kernel $K:\Rd\times\Rd\rightarrow\RR$ is
called a \emph{reproducing kernel} for $\mathrm{H}(\Rd)$ if
\begin{eqnarray*}
&&(i)~K(\cdot,\vy)\in\mathrm{H}(\Rd),~\TEXT{for all}~\vy\in\Rd,\\
&&(ii)~f(\vy)=\left(K(\cdot,\vy),f\right)_{\mathrm{H}(\Rd)},~\TEXT{for
all}~f\in\mathrm{H}(\Rd)~\TEXT{and all}~\vy\in\Rd.
\end{eqnarray*}
In this case $\mathrm{H}(\Rd)$ is called a \emph{reproducing-kernel Hilbert space}.
\end{definition}

\begin{theorem}[{\cite[Theorem~10.12]{Wen05}}]\label{t:RKHS-FT}
Suppose that $\Phi\in\Continue\cap\RealLone$ is an even function and
its $\Lone$-Fourier transform is $\hat{\phi}$. If $\Phi$ is a
positive definite function, then
\[
\NativePhi=\left\{f\in\Continue\cap\RealLtwo:\hat{\phi}^{-1/2}\hat{f}\in\Ltwo\right\},
\]
is a reproducing-kernel Hilbert space with reproducing kernel
\[
K(\vx,\vy)=\Phi(\vx-\vy),\quad{}\vx,\vy\in\Rd,
\]
and its inner product satisfies
\[
(f,g)_{\NativePhi}=(2\pi)^{-d/2}\int_{\Rd}\frac{\hat{f}(\vx)\overline{\hat{g}(\vx)}}{\hat{\phi}(\vx)}\ud\vx,
\quad{}f,g\in\NativePhi,
\]
where $\hat{f}$ and $\hat{g}$ are the $\Ltwo$-Fourier transform of
$f$ and $g$, respectively.
\end{theorem}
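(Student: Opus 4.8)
The plan is to establish the three assertions in turn: that $\NativePhi$ equipped with the stated bilinear form is a Hilbert space, that $K(\cdot,\vy)=\Phi(\cdot-\vy)\in\NativePhi$ for every $\vy$, and that $K$ reproduces point evaluation. First I would record the consequences of positive definiteness furnished by Theorem~\ref{t:PD-FT}: $\Phi$ is bounded, so $\Phi\in\Lone\cap\Linfty\subseteq\Ltwo$; and $\hat\phi$ is nonnegative and nonvanishing, hence $\hat\phi(\vx)>0$ for all $\vx$ and $\hat\phi^{-1/2}$ is a well-defined positive continuous function. The single nontrivial preliminary is that $\hat\phi\in\Lone$: this follows from $\hat\phi\ge 0$ together with $\Phi\in\Continue\cap\Lone$ (for instance from Bochner's theorem, or by testing $\Phi$ against an approximate identity and invoking monotone convergence, which yields $\int_{\Rd}\hat\phi=(2\pi)^{d/2}\Phi(0)<\infty$; see~\cite{Wen05}). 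Since $\hat\phi$ is also bounded, this gives the crucial facts that $\hat\phi^{1/2}\in\Ltwo$ and, consequently, that $\hat f=\hat\phi^{1/2}\cdot(\hat\phi^{-1/2}\hat f)\in\Lone$ for every $f\in\NativePhi$ (a product of two $\Ltwo$-functions).

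Next I would handle the inner-product and completeness claims. Finiteness of $(f,g)_{\NativePhi}$ on $\NativePhi\times\NativePhi$ follows from Cauchy--Schwarz, $\int_{\Rd}\abs{\hat f\,\overline{\hat g}}\hat\phi^{-1}\le\norm{\hat\phi^{-1/2}\hat f}_{\Ltwo}\norm{\hat\phi^{-1/2}\hat g}_{\Ltwo}<\infty$; bilinearity and symmetry over $\RR$ are clear, and because $\Phi$ is even and real $\hat\phi$ is real and even, while real-valuedness of $f,g$ gives the Hermitian symmetry $\hat f(-\vx)=\overline{\hat f(\vx)}$, which makes the integral real. Definiteness: $(f,f)_{\NativePhi}=0$ forces $\hat\phi^{-1/2}\hat f=0$ a.e., hence $\hat f=0$ a.e. since $\hat\phi>0$, hence $f=0$ by continuity. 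For completeness, note that $f\mapsto\hat\phi^{-1/2}\hat f$ is a linear isometry of $\NativePhi$ into $\Ltwo$ (up to the constant $(2\pi)^{-d/4}$). Given a Cauchy sequence $\{f_n\}$ in $\NativePhi$, the sequence $\{\hat\phi^{-1/2}\hat f_n\}$ converges in $\Ltwo$ to some $u$; set $\hat f:=\hat\phi^{1/2}u$, which lies in $\Ltwo$ (because $\hat\phi^{1/2}\in\Linfty$) and in $\Lone$ (because $\hat\phi^{1/2},u\in\Ltwo$), and let $f$ be the inverse Fourier transform of $\hat f$. Then $f\in\Continue$ (from $\hat f\in\Lone$), $f\in\RealLtwo$ (the $\Ltwo$-transform of $f$ is $\hat f$ by Plancherel, and real-valuedness follows by passing the Hermitian symmetry of $\hat f_n$ to the $\Ltwo$-limit), and $\hat\phi^{-1/2}\hat f=u\in\Ltwo$, so $f\in\NativePhi$ and $\norm{f_n-f}_{\NativePhi}\to 0$.

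Finally I would verify the reproducing-kernel properties. For (i), $\Phi(\cdot-\vy)\in\Continue\cap\RealLtwo$ and its Fourier transform is $e^{-\mathrm{i}\,\vx\cdot\vy}\hat\phi(\vx)$, so $\hat\phi^{-1/2}\widehat{\Phi(\cdot-\vy)}$ has modulus $\hat\phi^{1/2}\in\Ltwo$; hence $K(\cdot,\vy)\in\NativePhi$. For (ii), for $f\in\NativePhi$ the cancellation of $\hat\phi$ gives
\[
\left(K(\cdot,\vy),f\right)_{\NativePhi}=(2\pi)^{-d/2}\int_{\Rd}\frac{e^{-\mathrm{i}\,\vx\cdot\vy}\hat\phi(\vx)\,\overline{\hat f(\vx)}}{\hat\phi(\vx)}\,\ud\vx=(2\pi)^{-d/2}\int_{\Rd}e^{-\mathrm{i}\,\vx\cdot\vy}\,\overline{\hat f(\vx)}\,\ud\vx=\overline{f(\vy)},
\]
where the last step is Fourier inversion, legitimate since $\hat f\in\Lone$ and $f$ is continuous; as $f$ is real-valued, $\overline{f(\vy)}=f(\vy)$, which is exactly the reproducing identity, and uniqueness of reproducing kernels completes the statement. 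The main obstacle throughout is the integrability $\hat\phi\in\Lone$ (equivalently $\hat\phi^{1/2}\in\Ltwo$): it is what makes $\hat f\in\Lone$ for every $f\in\NativePhi$, licensing Fourier inversion, the continuity of the completion limit, and the membership $K(\cdot,\vy)\in\NativePhi$, and it is the one point that uses more than routine Fourier analysis, resting on $\hat\phi\ge 0$. A secondary nuisance is keeping the $\Lone$- and $\Ltwo$-Fourier transforms consistent and checking that real-valuedness survives $\Ltwo$-limits.
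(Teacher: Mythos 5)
The paper does not actually prove this statement---it is imported verbatim from \cite[Theorem~10.12]{Wen05}---so there is no internal proof to compare against. Your argument is, in substance, the standard one (and essentially the proof given in \cite{Wen05}): the key lemma $\hat\phi\in\Lone$, obtained from $\hat\phi\ge 0$ and $\Phi\in\Continue\cap\Lone$ via an approximate identity, which yields $\hat\phi^{1/2}\in\Ltwo$ and hence $\hat f\in\Lone$ for every $f$ in the space; completeness via the isometry $f\mapsto\hat\phi^{-1/2}\hat f$ into $\Ltwo$; the reproduction identity by cancellation of $\hat\phi$ followed by Fourier inversion; and the identification with $\NativePhi$ by uniqueness of the Hilbert function space attached to a given reproducing kernel. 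All of these steps are sound, and your explicit tracking of Hermitian symmetry (to get a real inner product and to see that real-valuedness survives $\Ltwo$-limits) is a point usually left implicit. The one inference that deserves a caveat is the very first: Theorem~\ref{t:PD-FT} gives that $\hat\phi$ is nonnegative and ``nonvanishing,'' which in \cite{Wen05} means \emph{not identically zero}, not \emph{everywhere positive}; a positive definite $\Phi\in\Continue\cap\RealLone$ can perfectly well have a Fourier transform with zeros (take $\hat\phi$ to be a nonnegative compactly supported bump, whose inverse transform is still strictly positive definite). The everywhere-positivity of $\hat\phi$ is really an implicit hypothesis built into the expression $\hat\phi^{-1/2}$ in the statement, and it does hold in every application made in this paper (where $\hat\phi$ is a positive constant times $\hat l^{-1}>0$), but it does not follow from positive definiteness alone; either add it as an assumption or adopt a convention for the quotient on the zero set of $\hat\phi$. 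With that proviso your proof is correct.
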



\section{Connecting Green Functions and Generalized Sobolev Spaces to Positive Definite Functions and Reproducing-Kernel Hilbert Spaces}\label{s:GenSob-RKHSG}

\subsection{Differential Operators and Distributional Adjoint Operators}\label{s:distributions}

First, we define a metric $\rho$ on the Schwartz space
\[
\Schwartz:=\Big\{
\gamma\in\ContinueInf:\forall\alpha,\beta\in\NN_0^d,\exists
C_{\alpha\beta\gamma}>0\TEXT{ s.t. }\sup_{\vx\in\Rd}\abs{
\vx^{\beta}D^{\alpha}\gamma(\vx)}\leq C_{\alpha\beta\gamma}\Big\}
\]
so that it becomes a Fr\'{e}chet space, where the metric $\rho$ is
given by
\[
\rho(\gamma_1,\gamma_2):=\sum_{\alpha,\beta\in\NN_0^d}
2^{-\abs{\alpha}-\abs{\beta}}\frac{\rho_{\alpha\beta}(\gamma_1-\gamma_2)}{1+\rho_{\alpha\beta}(\gamma_1-\gamma_2)},
\quad{}\rho_{\alpha\beta}(\gamma):=\sup_{\vx\in\Rd}\abs{\vx^{\beta}D^{\alpha}\gamma(\vx)},
\]
for each $\gamma_1,\gamma_2,\gamma\in\Schwartz$. This means that a
sequence $\left\{\gamma_n\right\}_{n=1}^{\infty}$ of $\Schwartz$
converges to an element $\gamma\in\Schwartz$ if and only if
$\vx^{\beta}D^{\alpha}\gamma_n(\vx)$ converges uniformly to
$\vx^{\beta}D^{\alpha}\gamma(\vx)$ on $\Rd$ for each
$\alpha,\beta\in\NN_0^d$. Together with its metric $\rho$ the Schwartz space $\Schwartz$ is
regarded as the classical test function space.

Let $\Schwartz'$ be the associated space of tempered
distributions (dual space of $\Schwartz$ or space of continuous linear
functionals on $\Schwartz$). We introduce the notation
\[
\langle T,\gamma\rangle:=T(\gamma),\quad\TEXT{for each
}T\in\Schwartz' \TEXT{ and }\gamma\in\Schwartz.
\]
For any $f,g\in\Lloc$ whose product $fg$ is integrable on $\Rd$
we denote a bilinear form by
\[
(f,g):=\int_{\Rd}f(\vx)g(\vx)\ud\vx.
\]
If $f,g\in\Ltwo$ then $(f,\overline{g})$ is equal to the standard
$\Ltwo$-inner product.

For each $f\in\Lloc\cap\SI$ there exists a
unique tempered distribution $T_f\in\Schwartz'$ such that
\[
\langle T_f,\gamma\rangle =(f,\gamma),\quad\TEXT{for each }
\gamma\in\Schwartz.
\]
So $f\in\Lloc\cap\SI$ can be viewed as an element of $\Schwartz'$
and we rewrite $T_f:=f$. This means that $\Lloc\cap\SI$ can be
embedded into $\Schwartz'$, i.e., $\Lloc\cap\SI\subseteq\Schwartz'$.
The Dirac delta function (Dirac distribution) $\delta_0$
concentrated at the origin is also an element of $\Schwartz'$, i.e.,
$\langle \delta_0,\gamma\rangle =\gamma(0)$ for each
$\gamma\in\Schwartz$ (see \cite[Chapter~1]{SteWei75}
and~\cite[Chapter~11]{HunNac05}).
\begin{remark}
$\SI$ denotes the collection of \emph{slowly increasing functions}
which grow at most like any particular fixed polynomial, i.e.,
\[
\SI:=\left\{ f\in\Rd\rightarrow\CC:
f(\vx)=\MATHCAL{O}(\norm{\vx}_2^{m})\TEXT{ as
}\norm{\vx_2}\to\infty\TEXT{ for some }m\in\NN_0 \right\},
\]
where the notation $f=\MATHCAL{O}(g)$ means that there is a positive
number $M$ such that $\abs{f}\leq M\abs{g}$.
\end{remark}

First, we will define the \emph{distributional derivative}
$P:\Schwartz'\rightarrow\Schwartz'$ by (strong) derivative
\[
D^{\alpha}:=\prod_{k=1}^{d}\frac{\partial^{\alpha_k}}{\partial
x_k^{\alpha_k}}, \quad{}\TEXT{where
}\abs{\alpha}:=\sum_{k=1}^d\alpha_k,~
\alpha:=\left(\alpha_1,\cdots,\alpha_d\right)^T\in\NN_0^d.
\]
According to \cite[Chapter~1]{SteWei75}, the derivative $D^{\alpha}$
is a continuous linear operator from $\Schwartz$ into $\Schwartz$.
Then $P$ can be well-defined by
\[
\langle PT,\gamma \rangle:=(-1)^{\abs{\alpha}}\langle
T,D^{\alpha}\gamma \rangle,\quad{}\TEXT{for each }T\in\Schwartz'
\TEXT{ and }\gamma\in\Schwartz.
\]
Since
$(D^{\alpha}\gamma_1,\gamma_2)=(-1)^{\abs{\alpha}}(\gamma_1,D^{\alpha}\gamma_2)$
for each $\gamma_1,\gamma_2\in\Schwartz$, the restricted operator
$P|_{\Schwartz}$ and the derivative $D^{\alpha}$ coincide on
$\Schwartz$. For convenience, we denote the distributional
derivative as $P:=D^{\alpha}$ (similar
as~\cite[Chapter~1]{AdaFou03}).

Next we wish to define the distributional adjoint operator of the
distributional derivative. In the same way as before we can
introduce the linear operator
$P^{\ast}:\Schwartz'\rightarrow\Schwartz'$ by using the derivative
$(-1)^{\abs{\alpha}}D^{\alpha}$, i.e.,
\[
\langle P^{\ast}T,\gamma \rangle:=(-1)^{\abs{\alpha}}\langle
T,(-1)^{\abs{\alpha}}D^{\alpha}\gamma \rangle=\langle
T,D^{\alpha}\gamma \rangle,\quad{}\TEXT{for each }T\in\Schwartz'
\TEXT{ and }\gamma\in\Schwartz.
\]
Moreover, the restricted operator $P^{\ast}|_{\Schwartz}$ and the
derivative $(-1)^{\abs{\alpha}}D^{\alpha}$ coincide on $\Schwartz$.
We call the linear operator $P^{\ast}$ the \emph{distributional
adjoint operator} of the distributional derivative $P=D^{\alpha}$.
It can be written as
$P^{\ast}=(-1)^{\abs{\alpha}}P=(-1)^{\abs{\alpha}}D^{\alpha}$ also.
Usually people call the derivative
$P^{\ast}|_{\Schwartz}=(-1)^{\abs{\alpha}}D^{\alpha}:\Schwartz\rightarrow\Schwartz$
as the classical adjoint operator of the distributional derivative
$P=D^{\alpha}:\Schwartz'\rightarrow\Schwartz'$. Here we can think of
the classical adjoint operator $P^{\ast}|_{\Schwartz}$ extended to
be the distributional adjoint operator $P^{\ast}$.

Now we will define a more general differential operator and its
distributional adjoint operator by using real linear combinations of
distributional derivatives.
\begin{definition}\label{d:DistrAdjoint}
The \emph{differential operator} (with real-valued constant
coefficients) \\ $P:\Schwartz'\rightarrow\Schwartz'$ is defined as
\[
P:=\sum_{\abs{\alpha}\leq m}c_{\alpha}D^{\alpha},\quad{}\TEXT{where
}c_{\alpha}\in\RR\TEXT{ and }\alpha\in\NN_0^d,~m\in\NN_0.
\]
Its \emph{distributional adjoint operator}
$P^{\ast}:\Schwartz'\rightarrow\Schwartz'$ is well-defined by
\[
P^{\ast}:=\sum_{\abs{\alpha}\leq
m}(-1)^{\abs{\alpha}}c_{\alpha}D^{\alpha}.
\]
To streamline terminology we will refer to differential operators
(with real-valued constant coefficients) and distributional adjoint
operators simply as differential operators and adjoint operators
respectively in this article.
\end{definition}

It is obvious that the adjoint operator $P^{\ast}$ of the
differential operator $P$ is also a differential operator. We can
further check that the differential operator $P$ and its adjoint
operator $P^{\ast}$ have the following property
\[
\langle PT,\gamma\rangle=\langle T,P^{\ast}\gamma\rangle~\TEXT{ and
}~\langle P^{\ast}T,\gamma\rangle=\langle T,P\gamma\rangle,
\quad\TEXT{for each }T\in\Schwartz' \TEXT{ and }\gamma\in\Schwartz.
\]
Any differential operator $P$ is also \emph{complex-adjoint
invariant}, i.e.,
\[
\overline{P\gamma}=P\overline{\gamma},\quad\TEXT{for each
}\gamma\in\Schwartz.
\]
%
\begin{remark}
Our distributional adjoint operator is different from the usual
adjoint operators of bounded linear operators defined in Hilbert or
Banach space. Our operator is formed in the dual space of the
Schwartz space and it may be not a bounded operator if $\Schwartz'$
is defined as a metric space. But it
is continuous when $\Schwartz'$ is given the weak-star topology as
the dual of $\Schwartz$. However, the idea of this construction is
similar to the classical ones. Therefore we call it an adjoint as
well.
\end{remark}
%

Finally, we know that the classical Sobolev spaces are defined by
weak derivatives. Here we will explain some relationships
between the distributional derivative $P:=D^{\alpha}$ and the
$\alpha^{\textrm{th}}$ weak derivative. Fixing any $f\in\Lloc\cap\SI$, if there
is a generalized function $g\in\Lloc\cap\SI$ such that
\[
(g,\gamma)=(-1)^{\abs{\alpha}}(f,D^{\alpha}\gamma)=\langle
D^{\alpha}f,\gamma\rangle,\quad\TEXT{for each }\gamma\in\Schwartz,
\]
then $D^{\alpha}f=g$ is called the \emph{$\alpha^{\textrm{th}}$ weak
derivative of $f$}. This means that distributional derivatives and
weak derivatives are the same on the classical Sobolev space.
\begin{remark}\label{r:WeakDeriv}
In the book~\cite{AdaFou03}, the definition of the weak derivative
has tiny differences from the one we use in this article. In particular, they use different test
functions to derive the weak derivative. We now state their
definition in order to compare it to the above mentioned weak derivative. Fixing any
$f\in\Lloc$, if there is a generalized function $g\in\Lloc$ such
that
\[
(g,\gamma)=(-1)^{\abs{\alpha}}(f,D^{\alpha}\gamma),\quad\TEXT{for
each }\gamma\in\MATHSCR{D}:=\ContinueInfComp,
\]
then they call $D^{\alpha}f:=g$ the \emph{$\alpha^{\textrm{th}}$
weak derivative of $f$}. However, if $f\in\Lloc\cap\SI$ and
$D^{\alpha}f\in\Lloc\cap\SI$, then the two definitions of weak
derivatives are equivalent since $\MATHSCR{D}(\Rd)$ is dense in
$\Schwartz$, where $\MATHSCR{D}(\Rd)=\Cont^{\infty}_0(\Rd)$ and its
dual space $\MATHSCR{D}(\Rd)'$ are defined in \cite[Chapter~1]{AdaFou03}. In this case, we can consider the two
weak derivatives as being the same.
\end{remark}
%

%
%

\subsection{Distributional Fourier Transforms}\label{s:DistFT}

Denote the \emph{Fourier transform} and \emph{inverse Fourier transform}
of any $\gamma\in\Schwartz$ by
\[
\hat{\gamma}(\vx):=(2\pi)^{-d/2}\int_{\Rd}\gamma(\vy)e^{-i\vx^T\vy}\ud\vy,~
\check{\gamma}(\vx):=(2\pi)^{-d/2}\int_{\Rd}\gamma(\vy)e^{i\vx^T\vy}\ud\vy,~i:=\sqrt{-1}.
\]
Since $\hat{\gamma}$ belongs to $\Schwartz$ for each
$\gamma\in\Schwartz$ and the Fourier transform map is a
homeomorphism of $\Schwartz$ onto itself, the \emph{distributional
Fourier transform} $\hat{T}\in\Schwartz'$ of the tempered distribution
$T\in\Schwartz'$ is well-defined by
\[
\langle \hat{T},\gamma\rangle :=\langle T,\hat{\gamma}\rangle,
\quad\TEXT{for each }\gamma\in\Schwartz.
\]
Since $\overline{\hat{\gamma}}=\check{\overline{\gamma}}$ for each
$\gamma\in\Schwartz$, we have
\[
\langle T,\overline{\gamma}\rangle =\langle
\hat{T},\overline{\hat{\gamma}}\rangle,\quad\TEXT{for each
}T\in\Schwartz' \TEXT{ and }\gamma\in\Schwartz.
\]

So the Fourier transform of $\gamma\in\Schwartz$ is the same as its
distributional transform. If $f\in\Lone$ or $f\in\Ltwo$ then its
$\Lone$-Fourier transform or $\Ltwo$-Fourier transform is equal to
its distributional Fourier transform. The distributional Fourier
transform $\hat{\delta}_0$ of the Dirac delta function $\delta_0$ is
equal to $(2\pi)^{-d/2}$. Moreover, we can also check that the
distributional Fourier transform map is an isomorphism of the
topological vector space $\Schwartz'$ onto itself (see \cite[Chapter
1]{SteWei75} and \cite[Chapter~7]{AdaFou03}).

Now we want to define the distributional Fourier transforms of
differential operators. First, we will derive a linear operator
$\mathcal{L}:\Schwartz'\rightarrow\Schwartz'$ using any fixed
complex-valued polynomial $\hat{p}$ on $\Rd$. Since the linear
operator $\gamma\mapsto \hat{p}\gamma$ is a continuous operator from
$\Schwartz$ into $\Schwartz$, the linear operator $\mathcal{L}$ can
be well-defined by
\[
\langle \mathcal{L}T,\gamma\rangle:=\langle
T,\hat{p}\gamma\rangle,\quad\TEXT{for each }T\in\Schwartz' \TEXT{
and }\gamma\in\Schwartz.
\]
Since $\mathcal{L}f=\hat{p}f\in\Lloc\cap\SI$ when $f\in\Lloc\in\SI$,
we use the notation $\mathcal{L}:=\hat{p}$ for convenience.

Next we consider the case of distributional derivatives. According
to \cite[Chapter~1]{SteWei75}, we know that
$\widehat{D^{\alpha}\gamma}=\hat{p}\hat{\gamma}$ for each
$\gamma\in\Schwartz$, where $\hat{p}(\vx):=(i\vx)^{\alpha}$ is a
complex-valued polynomial on $\Rd$. Hence we can verify that
$\langle \widehat{PT},\gamma \rangle=\langle
T,\hat{p}\hat{\gamma}\rangle$ for each $T\in\Schwartz'$ and
$\gamma\in\Schwartz$, where $P:=D^{\alpha}$ and
$\hat{p}(\vx):=(i\vx)^{\alpha}$. This implies that
$\widehat{PT}=F\hat{T}=\hat{p}\hat{T}$ for each $T\in\Schwartz'$.
Therefore we can denote the distributional Fourier transform of a
differential operator in the following way.
\begin{definition}\label{d:DistrFourier}
Let $P$ be a differential operator. If there is a complex-valued
polynomial $\hat{p}$ on $\Rd$ such that
\[
\langle \widehat{PT},\gamma\rangle = \langle
\hat{T},\hat{p}\gamma\rangle=\langle \hat{p}\hat{T},\gamma\rangle
\quad\TEXT{for each }T\in\Schwartz' \TEXT{ and }\gamma\in\Schwartz,
\]
then $\hat{p}$ is said to be the \emph{distributional Fourier
transform} of $P$.
\end{definition}

According to Definition~\ref{d:DistrFourier}, each differential
operator $P$ possesses a distributional Fourier transform $\hat{p}$.
The complex-valued polynomial $\hat{p}$ can be written explicitly as
\[
\hat{p}(\vx)=\sum_{\abs{\alpha}\leq
m}c_{\alpha}(i\vx)^{\alpha},\quad{}\TEXT{where
}P=\sum_{\abs{\alpha}\leq m}c_{\alpha}D^{\alpha},\
c_{\alpha}\in\RR,\ \alpha\in\NN_0^d,\ m\in\NN_0.
\]
Moreover, since $P$ is defined with the real-valued constant
coefficients, we have $\hat{p}^{\ast}=\overline{\hat{p}}$, where
$\hat{p}^{\ast}$ is the distributional Fourier transform of the
adjoint operator $P^{\ast}$.

%
%

\subsection{Green Functions and Generalized Sobolev Spaces}

\begin{definition}
$G$ is the (full-space) \emph{Green function with respect to the
differential operator $L$} if $G\in\Schwartz'$ satisfies the
equation
\begin{equation}\label{Green}
LG=\delta_0.
\end{equation}
\end{definition}
Equation~(\ref{Green}) is to be understood in the distributional sense which means that
$\langle G,L^{\ast}\gamma \rangle=\langle LG,\gamma \rangle=\langle
\delta_0,\gamma \rangle=\gamma(0)$ for each $\gamma\in\Schwartz$.

According to Theorem~\ref{t:PD-FT} we can obtain the following
theorem.

\begin{theorem}\label{t:GF-PD}
Let $L$ be a differential operator and its Fourier transform
$\hat{l}$ be positive on $\Rd$ so that $\hat{l}^{-1}\in\Lone$. If
the Green function $G\in\Continue\cap\RealLone$ with respect to $L$
is an even function, then $G$ is a positive definite function and
\[
\hat{\MATHCAL{G}}(\vx):=(2\pi)^{-d/2}\hat{l}(\vx)^{-1},\quad{}\vx\in\Rd.
\]
is the $\Lone$-Fourier transform of $G$.
\end{theorem}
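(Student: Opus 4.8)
The plan is to reduce the statement to a direct application of Theorem~\ref{t:PD-FT}. The key idea is that taking the distributional Fourier transform converts the differential equation $LG=\delta_0$ into an algebraic equation. First I would apply the distributional Fourier transform to both sides of $LG=\delta_0$. Using Definition~\ref{d:DistrFourier}, the left-hand side becomes $\widehat{LG}=\hat{l}\,\hat{\MATHCAL{G}}$, where $\hat{l}$ is the (polynomial) distributional Fourier transform of $L$; the right-hand side is the known value $\hat{\delta}_0=(2\pi)^{-d/2}$ (a constant function). Hence, as tempered distributions, $\hat{l}\,\hat{\MATHCAL{G}}=(2\pi)^{-d/2}$.

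Next I would solve for $\hat{\MATHCAL{G}}$. Since $\hat{l}$ is assumed positive on all of $\Rd$, it is in particular nonvanishing, so division by $\hat{l}$ is legitimate and we obtain $\hat{\MATHCAL{G}}(\vx)=(2\pi)^{-d/2}\hat{l}(\vx)^{-1}$ as an equality of functions (not merely distributions), which is exactly the claimed formula. The hypothesis $\hat{l}^{-1}\in\Lone$ then shows that $\hat{\MATHCAL{G}}\in\Lone$, so this function is a genuine $\Lone$-function; I would remark that the $\Lone$-Fourier transform of $G$ coincides with its distributional Fourier transform (as noted in Section~\ref{s:DistFT}), which identifies $\hat{\MATHCAL{G}}$ with the ordinary $\Lone$-Fourier transform $\hat{g}$ of $G$, justifying the notation in the statement.

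Finally I would invoke Theorem~\ref{t:PD-FT}. We are given $G\in\Continue\cap\RealLone$ and that $G$ is even, so the hypotheses of that theorem are met, with $\hat{g}=\hat{\MATHCAL{G}}=(2\pi)^{-d/2}\hat{l}^{-1}$. This Fourier transform is nonnegative and nonvanishing precisely because $\hat{l}>0$ on $\Rd$. It remains only to check that $G$ is bounded; but $G$ is the inverse Fourier transform of an $\Lone$-function, and the inverse Fourier transform of an $\Lone$-function is bounded (indeed continuous), so $\norm{G}_{\Linfty}\le(2\pi)^{-d/2}\norm{\hat{g}}_{\Lone}<\infty$. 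Theorem~\ref{t:PD-FT} then yields that $G$ is positive definite, completing the proof.

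The main obstacle I anticipate is the bookkeeping needed to pass rigorously between the distributional and classical Fourier transforms: one must be careful that $\widehat{LG}=\hat{l}\,\hat{G}$ holds as an identity in $\Schwartz'$ (this is where Definition~\ref{d:DistrFourier} is essential), and that once $\hat{l}^{-1}$ is known to be in $\Lone$ the distributional solution $\hat{G}$ is forced to agree with the locally integrable function $(2\pi)^{-d/2}\hat{l}^{-1}$ rather than differing from it by something supported on the zero set of $\hat{l}$ — but positivity of $\hat{l}$ makes that zero set empty, so this subtlety is harmless. The positivity and non-vanishing conditions required by Theorem~\ref{t:PD-FT} then fall out immediately from the sign assumption on $\hat{l}$.
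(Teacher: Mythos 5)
Your proposal is correct and follows essentially the same route as the paper's own proof: Fourier-transform $LG=\delta_0$ to get $\hat{l}\,\widehat{G}=(2\pi)^{-d/2}$, divide by the positive polynomial $\hat{l}$ (the paper does this via the pairing $\langle\widehat{G},\gamma\rangle=\langle\hat{l}\widehat{G},\hat{l}^{-1}\gamma\rangle$, using that $\hat{l}^{-1}\gamma\in\Schwartz$), identify the result with the $\Lone$-Fourier transform since $\hat{l}^{-1}\in\Lone$, deduce boundedness of $G$ from the inversion formula, and conclude positive definiteness from Theorem~\ref{t:PD-FT}. No substantive differences.
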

\begin{proof}
First we want to prove that $\hat{\MATHCAL{G}}$ is the
$\Lone$-Fourier transform of $G$. The fact that
$\hat{l}^{-1}\in\Lone$ implies that $\hat{\MATHCAL{G}}\in\Lone$. If
we can check that
\[
\langle
\widehat{G},\gamma\rangle=(\hat{\MATHCAL{G}},\gamma),\quad{}\TEXT{for
each }\gamma\in\Schwartz,
\]
where $\widehat{G}$ is the distributional Fourier transform of $G$,
then we can conclude that $\hat{\MATHCAL{G}}$ is the $\Lone$-Fourier
transform of $G$. Since $\hat{l}^{-1}\in\ContinueInf$ and
$D^{\alpha}\left(\hat{l}^{-1}\right)\in\SI$ for each
$\alpha\in\NN_0^d$, $\hat{l}^{-1}\gamma\in\Schwartz$ for each
$\gamma\in\Schwartz$. Hence
\begin{eqnarray*}
&&\langle \widehat{G},\gamma\rangle=\langle
\hat{l}\widehat{G},\hat{l}^{-1}\gamma\rangle= \langle
\widehat{LG},\hat{l}^{-1}\gamma\rangle= \langle
\hat{\delta_0},\hat{l}^{-1}\gamma\rangle\\
&=& \langle (2\pi)^{-d/2},\hat{l}^{-1}\gamma\rangle=
((2\pi)^{-d/2},\hat{l}^{-1}\gamma)=(\hat{\MATHCAL{G}},\gamma).
\end{eqnarray*}

According to \cite[Corollary~5.24]{Wen05}, $G$ can be recovered from
its $\Lone$-Fourier transform, i.e.,
\[
G(\vx)=(2\pi)^{-d/2}\int_{\Rd}\hat{\MATHCAL{G}}(\vy)e^{i\vx^T\vy}\ud\vy,\quad{}\vx\in\Rd.
\]
Then we have
\[
\abs{G(\vx)}\leq(2\pi)^{-d}\int_{\Rd}\abs{l(\vy)^{-1}e^{i\vx^T\vy}\ud\vy}
\leq(2\pi)^{-d}\norm{l^{-1}}_{\Lone}<\infty,
\]
which shows that $G$ is bounded. Since $\hat{\MATHCAL{G}}$ is
positive on $\Rd$, $G$ is a positive definite function by
Theorem~\ref{t:PD-FT}.
\end{proof}

%
\begin{definition}\label{d:GenSolSp}
Let the vector differential operator  $\vP=\left(P_1, \cdots ,P_n,
\cdots \right)^T$ be set up by countably many differential operators
$\left\{P_n\right\}_{n=1}^{\infty}$. The real \emph{generalized
Sobolev space} induced by $\vP$ is defined by
\[
\Hp:= \Big\{ f\in\RealLtwo:\left\{P_n
f\right\}_{n=1}^{\infty}\subseteq\Ltwo\TEXT{ and }
\sum_{n=1}^{\infty}\norm{P_nf}_{\Ltwo}^2<\infty \Big\}
\]
and it is equipped with the semi-inner product
\[
(f,g)_{\Hp}:=\sum_{n=1}^{\infty}(P_n f,\overline{P_n g}), \quad{
}f,g\in\Hp.
\]
\end{definition}
What is the meaning of $\Hp$? By the definition of the generalized
Sobolev space, we know that $\Hp$ is a real-valued subspace of
$\Ltwo$ and it is equipped with a semi-inner product induced by the
vector differential operator $\vP$. On the other hand,
$f\in\RealLtwo$ belongs to $\Hp$ if and only if there is a sequence
$\{g_n\}_{n=1}^{\infty}\subset\Ltwo$ such that
$\sum_{n=1}^{\infty}\norm{g_n}_{\Ltwo}^2<\infty$ and
\[
(g_n,\gamma)=\langle g_n,\gamma \rangle=\langle P_nf,\gamma
\rangle=\langle f,P^{\ast}_n\gamma
\rangle=(f,P^{\ast}_n\gamma),\quad{}\TEXT{for each
}\gamma\in\Schwartz,~n\in\NN.
\]

In the following theorems of this section we only consider $\vP$
constructed by a finite number of differential operators
$P_1,\ldots,P_n$. If $\vP:=\left(P_1, \cdots ,P_n \right)^T$, then
the differential operator
\[
L:=\vP^{\ast T}\vP=\sum_{j=1}^nP_j^{\ast}P_j
\]
is well-defined, where $\vP^{\ast}:=\left(P_1^{\ast}, \cdots
,P_n^{\ast} \right)^T$ is the adjoint operator of $\vP$.
Furthermore, the distributional Fourier transform $\hat{l}$ of $L$
can be computed as the form
\[
\hat{l}(\vx)=\sum_{j=1}^n\hat{p}_j^{\ast}(\vx)\hat{p}_j(\vx)
=\sum_{j=1}^n\overline{\hat{p}_j(\vx)}\hat{p}_j(\vx)
=\sum_{j=1}^n\abs{\hat{p}_j(\vx)}^2=\norm{\FvP(\vx)}_2^2,\quad{}\vx\in\Rd,
\]
where $\FvP=( \hat{p}_1,\cdots,\hat{p}_n )^T$ is the distributional
Fourier transform of $\vP$ and $\FvP^{\ast}=(
\hat{p}_1^{\ast}\cdots,\hat{p}_n^{\ast})^T$ is the distributional
Fourier transform of $\vP^{\ast}$. Now we can obtain the main
theorem about the space $\Hp$ induced by the vector differential operator
$\vP:=\left(P_1, \cdots ,P_n\right)^T$.

\begin{theorem}\label{t:RKHS-Hp}
Let the vector differential operator be $\vP:=\left(P_1, \cdots
,P_n\right)^T$ and its distributional Fourier transform be $\FvP:=(
\hat{p}_1,\cdots,\hat{p}_n)^T$. Suppose that $\FvP$ is nonzero on
$\Rd$ and $\vx\mapsto\norm{\FvP(\vx)}_2^{-2}\in\Lone$. If the Green
function $G\in\Continue\cap\RealLone$ with respect to the
differential operator $L=\vP^{\ast T}\vP$ is chosen to be an even
function, then $G$ is a positive definite function and its related
reproducing-kernel Hilbert space $\NativeG$ is equivalent to the
generalized Sobolev space $\Hp$, i.e.,
\[
(f,g)_{\NativeG}=(f,g)_{\Hp}, \quad{ }f,g\in\NativeG=\Hp.
\]
\end{theorem}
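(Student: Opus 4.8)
The plan is to compute the underlying set and the inner product of $\NativeG$ explicitly via the Fourier transform and then match them against a Fourier-side description of $\Hp$.

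First I would set $\hat l(\vx):=\norm{\FvP(\vx)}_2^2$, which by the computation preceding the theorem is the distributional Fourier transform of $L=\vP^{\ast T}\vP$. The hypothesis that $\FvP$ is nonzero on $\Rd$ makes $\hat l$ positive on $\Rd$, and the hypothesis $\vx\mapsto\norm{\FvP(\vx)}_2^{-2}\in\Lone$ is exactly $\hat l^{-1}\in\Lone$; so Theorem~\ref{t:GF-PD} applies and tells us that $G$ is a positive definite function with $\Lone$-Fourier transform $\hat{\MATHCAL{G}}(\vx)=(2\pi)^{-d/2}\norm{\FvP(\vx)}_2^{-2}$. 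Then Theorem~\ref{t:RKHS-FT} gives $\NativeG=\{f\in\Continue\cap\RealLtwo:\hat{\MATHCAL{G}}^{-1/2}\hat f\in\Ltwo\}$ with the stated inner product formula. Substituting the value of $\hat{\MATHCAL{G}}$, the membership condition becomes $\norm{\FvP}_2\hat f\in\Ltwo$ and the inner product becomes $(f,g)_{\NativeG}=\int_{\Rd}\norm{\FvP(\vx)}_2^2\,\hat f(\vx)\overline{\hat g(\vx)}\,\ud\vx$.

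Next I would translate $\Hp$ to the Fourier side. For $f\in\RealLtwo$ and each $j$, the distributional Fourier transform of $P_jf$ is $\hat p_j\hat f$ (Section~\ref{s:DistFT}); since the distributional Fourier transform is an isomorphism of $\Schwartz'$ that restricts to a unitary map on $\Ltwo$, the condition $P_jf\in\Ltwo$ is equivalent to $\hat p_j\hat f\in\Ltwo$, and in that case Plancherel gives $(P_jf,\overline{P_jg})=\int_{\Rd}\abs{\hat p_j}^2\hat f\overline{\hat g}$. Summing over the finitely many $j$ (so the condition $\sum\norm{P_nf}_{\Ltwo}^2<\infty$ in Definition~\ref{d:GenSolSp} is automatic), we obtain: $f\in\RealLtwo$ has $\{P_jf\}_{j=1}^n\subseteq\Ltwo$ if and only if $\norm{\FvP}_2\hat f\in\Ltwo$, and then $(f,g)_{\Hp}=\int_{\Rd}\norm{\FvP}_2^2\,\hat f\overline{\hat g}$ — the same integral obtained for $\NativeG$. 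So the two inner products agree whenever both are defined, and the only gap between the two function spaces is that $\NativeG$ additionally demands $f\in\Continue$ whereas $\Hp$ only demands $f\in\RealLtwo$.

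Closing that gap is the one non-formal step, and it is where $\norm{\FvP}_2^{-2}\in\Lone$ does real work. Given $f\in\Hp$, writing $\hat f=\norm{\FvP}_2^{-1}\cdot\bigl(\norm{\FvP}_2\hat f\bigr)$ and applying the Cauchy--Schwarz inequality with the two factors in $\Ltwo$ shows $\hat f\in\Lone$; since also $\hat f\in\Ltwo$, Fourier inversion (\cite[Corollary~5.24]{Wen05}) represents $f$ almost everywhere by the continuous function $\vx\mapsto(2\pi)^{-d/2}\int_{\Rd}\hat f(\vy)e^{i\vx^T\vy}\,\ud\vy$, so after identification $f\in\Continue$. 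Hence $\Hp\subseteq\NativeG$, while $\NativeG\subseteq\Hp$ is immediate from the Fourier characterizations, giving $\NativeG=\Hp$ with matching inner products (and, as a byproduct, positivity of $\norm{\FvP}_2^2$ makes $(\cdot,\cdot)_{\Hp}$ a genuine inner product on $\Hp$). The main obstacle I anticipate is precisely this regularity upgrade, together with making rigorous the passage between ``$P_jf\in\Ltwo$ distributionally'' and ``$\hat p_j\hat f\in\Ltwo$''; everything else is a direct substitution into Theorems~\ref{t:GF-PD} and~\ref{t:RKHS-FT} plus Plancherel.
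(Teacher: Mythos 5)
Your proposal is correct and follows essentially the same route as the paper's proof: Theorem~\ref{t:GF-PD} for positive definiteness and the formula $\hat{\MATHCAL{G}}=(2\pi)^{-d/2}\norm{\FvP}_2^{-2}$, Theorem~\ref{t:RKHS-FT} plus Plancherel to match the two inner products on the Fourier side, and the Cauchy--Schwarz argument showing $\hat f\in\Lone$ to upgrade $f\in\Hp$ to continuity. The only presentational difference is that you organize both inclusions around a single Fourier-side characterization of $\Hp$, whereas the paper verifies $P_jf=(\hat p_j\hat f)\check{\ }$ by an explicit distributional pairing computation --- the step you correctly flag as the one needing care.
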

%
\begin{proof}
By the above discussion, the distributional Fourier transform
$\hat{l}$ of $L$ is equal to $\hat{l}(\vx)=\norm{\FvP(\vx)}_2^2$.
Since $\FvP$ is nonzero on $\Rd$ and
$\vx\mapsto\norm{\FvP(\vx)}_2^{-2}\in\Lone$, $\hat{l}$ is positive
on $\Rd$ and $\hat{l}^{-1}\in\Lone$. According to
Theorem~\ref{t:GF-PD}, $G$ is a positive definite function and its
$\Lone$-Fourier transform is given by
\[
\hat{\MATHCAL{G}}(\vx):=(2\pi)^{-d/2}\hat{l}(\vx)^{-1}=(2\pi)^{-d/2}\norm{\FvP(\vx)}_2^{-2},\quad{}\vx\in\Rd.
\]
With the material developed thus far we are able to construct the
reproducing-kernel Hilbert space $\NativeG$ related to $G$. We
remark that since $\hat{l}$ is positive on $\Rd$, $Lf=0$ for some
$f\in\Ltwo$ if and only if $f=0$. This implies that $\vP f=0$ if and
only if $f=0$. Hence we can conclude that $\Hp$ is an inner-product
space under this condition.

Next, we fix any $f\in\NativeG$. According to
Theorem~\ref{t:RKHS-FT}, $f\in\RealContinue\cap\Ltwo$ possesses an
$\Ltwo$-Fourier transform $\hat{f}$ and
$\vx\mapsto\hat{f}(\vx)\norm{\FvP(\vx)}_2\in\Ltwo$. This means that
the functions $\hat{p}_j\hat{f}\in\Ltwo$, $j=1,\ldots,n$. Therefore we can
define
\[
f_{P_j}:=(\hat{p}_j\hat{f})\check{}\in\Ltwo,\quad{}j=1,\ldots,n,
\]
using the inverse $\Ltwo$-Fourier transform.

Since $\hat{p}_j$ is a polynomial for each $j=1,\ldots,n$,
$\hat{p}_j\check{\overline{\gamma}}\in\Schwartz$ for each
$\gamma\in\Schwartz$. Moreover,
$\hat{p}_j\check{\overline{\gamma}}=\hat{p}_j\overline{\hat{\gamma}}
=\overline{\hat{p}_j^{\ast}\hat{\gamma}}
=\overline{\widehat{P_j^{\ast}\gamma}}$ implies that
\begin{eqnarray*}
(f_{P_j},\overline{\gamma})=((\hat{p}_j\hat{f})\check{},\overline{\gamma})
=(\hat{p}_j\hat{f},\check{\overline{\gamma}})
=\langle\hat{f},\hat{p}_j\check{\overline{\gamma}}\rangle =\langle
\hat{f},\overline{\widehat{P_j^{\ast}\gamma}}\rangle =\langle
f,\overline{P_j^{\ast}\gamma}\rangle  =\langle
f,P_j^{\ast}\overline{\gamma}\rangle =\langle
P_jf,\overline{\gamma}\rangle.
\end{eqnarray*}
This shows that $P_jf=f_{P_j}\in\Ltwo$. Therefore we know that
$f\in\Hp$ and then $\NativeG\subseteq\Hp$.

To establish equality of the inner products we let $f,g\in\NativeG$.
Then the Plancherel theorem~\cite{HunNac05} yields
\begin{eqnarray*}
(f,g)_{\Hp}&=&\sum_{j=1}^{n}(f_{P_j},\overline{g_{P_j}})
=\sum_{j=1}^{n}(\hat{p}_j\hat{f},\overline{\hat{p}_j\hat{g}})
=\sum_{j=1}^{n}\int_{\Rd}\hat{f}(\vx)\overline{\hat{g}(\vx)}\abs{\hat{p}_j(\vx)}^2\ud\vx\\
&=&\int_{\Rd}\hat{f}(\vx)\overline{\hat{g}(\vx)}\norm{\FvP(\vx)}_2^2\ud\vx
=\int_{\Rd}\hat{f}(\vx)\overline{\hat{g}(\vx)}\hat{l}(\vx)\ud\vx\\
&=&(2\pi)^{-d/2}\int_{\Rd}\frac{\hat{f}(\vx)\overline{\hat{g}(\vx)}}{\hat{\MATHCAL{G}}(\vx)}\ud\vx
=(f,g)_{\NativeG}.
\end{eqnarray*}

Finally, we verify that $\NativeG=\Hp$. We fix any $f\in\Hp$. Let
$\hat{f}$ and $\widehat{P_jf}$, respectively, be the $\Ltwo$-Fourier
transforms of $f$ and $P_jf$, $j=1,\ldots,n$. Using the Plancherel
theorem \cite{HunNac05} again we obtain
\[
\int_{\Rd}\abs{\hat{f}(\vx)\hat{p}_j(\vx)}^2\ud\vx=(\hat{p}_j\hat{f},\overline{\hat{p}_j\hat{f}})
=(\widehat{P_jf},\overline{\widehat{P_jf}})=(P_jf,\overline{P_jf})<\infty.
\]
And therefore, with the help of the proof above, we have
\begin{eqnarray*}
\int_{\Rd}\frac{\abs{\hat{f}(\vx)}^2}{\hat{\MATHCAL{G}}(\vx)}\ud\vx
&=&(2\pi)^{d/2}\int_{\Rd}\abs{\hat{f}(\vx)}^2\hat{l}(\vx)\ud\vx
=(2\pi)^{d/2}\int_{\Rd}\abs{\hat{f}(\vx)}^2\norm{\FvP(\vx)}_2^2\ud\vx\\
&=&(2\pi)^{d/2}\sum_{j=1}^n\int_{\Rd}\abs{\hat{f}(\vx)\hat{p}_j(\vx)}^2\ud\vx
<\infty
\end{eqnarray*}
showing that $\hat{\MATHCAL{G}}^{-1/2}\hat{f}\in\Ltwo$. This means
in particular that $\hat{f}\in\Lone$ because
\[
\int_{\Rd}\abs{\hat{f}(\vx)}\ud\vx\leq
\left(\int_{\Rd}\frac{\abs{\hat{f}(\vx)}^2}{\hat{\MATHCAL{G}}(\vx)}\ud\vx\right)^{1/2}
\left(\int_{\Rd}\hat{\MATHCAL{G}}(\vx)\right)^{1/2}<\infty.
\]
Since the inverse $\Lone$-Fourier transform of $\hat{f}$ is
continuous, $f\in\Continue$. According to Theorem~\ref{t:PD-FT},
$f\in\NativeG$ which implies that $\Hp\subseteq\NativeG$.

\end{proof}

\section{Examples of Positive Definite Kernels Generated By Green Functions}\label{s:examples}
\subsection{One-Dimensional Cases}

With the theory we developed in the preceding section in mind we again discuss the univariate Sobolev splines of Example~\ref{ex:unisobolevspline}.

\begin{example}[Univariate Sobolev Splines]\label{ex:Sobolevspline}
Let $\sigma>0$ be a scalar parameter and consider $\vP:=\left( d/d
\MATHRM{x},\sigma I\right)^T$ with $L:=\vP^{\ast T}\vP=\sigma^2I-d^2/d \MATHRM{x}^2$.
It is known that the Green function with respect to $L$ is
\[
G(x):=\frac{1}{2\sigma}\exp(-\sigma\abs{\MATHRM{x}}),\quad{}\MATHRM{x}\in\RR.
\]
Since $\vP$ and $G$ satisfy the conditions of Theorem~\ref{t:RKHS-Hp},
we know $G$ is positive definite and that the interpolant formed by
$G$ is given by
\begin{equation}\label{Sobolevspline}
s_{f,X}(\MATHRM{x}):=\sum_{j=1}^N c_jG(\MATHRM{x}-\MATHRM{x}_j),\quad{}\MATHRM{x}\in\RR.
\end{equation}
Formula~(\ref{Sobolevspline}) denotes a Sobolev spline (or Mat\'ern)
interpolant.

Since $f\in\HP(\RR)$ if and only if $f',f\in\Ltwo$, $\Hil^1(\RR)$ is
equivalent to $\HP(\RR)$. Applying Theorem~\ref{t:RKHS-Hp} and
\cite[Theorem~13.2]{Wen05} we confirm that
\[
\norm{s_{f,X}}_{\HP(\RR)}=\min\left\{
\norm{f}_{\HP(\RR)}:f\in\MATHCAL{H}^1(\RR)\TEXT{ and
}f(\MATHRM{x}_j)=y_j,j=1,\ldots,N\right\},
\]
i.e., $s_{f,X}$ is the minimum $\HP(\RR)$-norm interpolant to the
data from $\Hil^1(\RR)$.
\end{example}

%
%

\subsection{d-Dimensional Cases}
\begin{example}[Sobolev Splines]\label{ex:Matern}
This is a generalization of Example~\ref{ex:Sobolevspline}. Let
$\vP:=\left( \vQ_0^T,\cdots,\vQ_n^T \right)^T$ with a scalar
parameter $\sigma>0$, where
\[
\vQ_j:=\left\{
\begin{array}{ll}
\left( \frac{n!\sigma^{2n-2j}}{j!(n-j)!} \right)^{1/2}\Delta^k & \TEXT{when }j=2k,\\
\left( \frac{n!\sigma^{2n-2j}}{j!(n-j)!} \right)^{1/2}\Delta^k\nabla
& \TEXT{when }j=2k+1,
\end{array}
\right. \quad{} k\in\NN_0,\ j=0,1,\ldots,n,\ n>d/2.
\]
Here we use $\Delta^0:=I$. With these definitions we get $L:=\vP^{\ast
T}\vP=(\sigma^2I-\Delta)^n$.

The \emph{Sobolev spline} (or Mat\'ern function) is known to be the
Green function with respect to $L$ (see
\cite[Chapter~6.1.6]{BerThAg04}), i.e.,
\[
G(\vx):=\frac{2^{1-n-d/2}}{\pi^{d/2}\Gamma(n)\sigma^{2n-d}}\left(\sigma\norm{\vx}_2\right)^{n-d/2}K_{d/2-n}\left(\sigma\norm{\vx}_2\right),
\quad{}\vx\in\Rd,
\]
where $K_{m}(\cdot)$ is the \emph{modified Bessel function of the
second kind of order $m$}. Since $\vP$ and $G$ satisfy the
conditions of Theorem~\ref{t:RKHS-Hp}, $G$ is positive definite and
the associated interpolant $s_{f,X}$ is the same as the Sobolev
spline (or Mat\'ern) interpolant.

Since $f\in\Hp$ if and only if $\Delta^{n/2}f,f\in\Ltwo$, $\Hp$ is
equivalent to $\Hil^n(\Rd)$ which implies that $\NativeG$ and
$\Hil^n(\Rd)$ are isomorphic by Theorem~\ref{t:RKHS-Hp}. It follows
that the real classical Sobolev space $\Hil^n(\Rd)$ becomes a
reproducing-kernel Hilbert space with $\Hp$-inner-product and its
reproducing kernel is $K(\vx,\vy):=G(\vx-\vy)$.
\end{example}

In the following example we are not able to establish that the operator $\vP$ satisfies the conditions of Theorem~\ref{t:RKHS-Hp} and so part of the connection to the theory developed in this paper is lost. We therefore use the symbol $\Phi$ to denote the kernel instead of $G$.

\begin{example}[Gaussians]\label{ex:Gaussian}
The Gaussian kernel $K(\vx,\vy):=\Phi(\vx-\vy)$ based on the
Gaussian function $\Phi$ is very important and popular in current
research fields such as scattered data approximation and machine
learning. Many people would therefore like to better understand the
reproducing-kernel Hilbert space associated with the Gaussian
function. In this example, we will show that the reproducing-kernel
Hilbert space of the Gaussian function is equivalent to a
generalized Sobolev space.

We first consider the \emph{Gaussian} function
\[
\Phi(\vx):=\frac{\sigma^d}{\pi^{d/2}}\exp(-\sigma^2\norm{\vx}_2^2),\quad{}\vx\in\Rd,
\quad{}\sigma>0
\]
We know that $\Phi$ is a positive definite function and its
$\Lone$-Fourier transform is given by (see \cite[Chapter~4]{Fas07})
\[
\hat{\phi}(\vx)=(2\pi)^{-d/2}\exp\left(
-\frac{\norm{\vx}_2^2}{4\sigma^2} \right),\quad{}\vx\in\Rd.
\]
According to Theorem~\ref{t:RKHS-FT} the reproducing-kernel Hilbert
space of $\Phi$ is given by
\[
\NativePhi=\left\{
f\in\Continue\cap\RealLtwo:\hat{\phi}^{-1/2}\hat{f}\in\Ltwo
\right\},
\]
and its inner-product is equal to
\[
(f,g)_{\NativePhi}=(2\pi)^{-d/2}\int_{\Rd}\frac{\hat{f}(\vx)\overline{\hat{g}(\vx)}}{\hat{\phi}(\vx)}\ud\vx,
\quad{}f,g\in\NativePhi,
\]
where $\hat{f},\hat{g}$ are the $\Ltwo$-Fourier transforms of
$f,g\in\NativePhi$, respectively.

Let $\vP:=\left( \vQ_0^T,\cdots,\vQ_n^T,\cdots \right)^T$, where
\begin{equation}\label{P_Gaussian}
\vQ_n:=\left\{
\begin{array}{ll}
\left( \frac{1}{n!4^n\sigma^{2n}} \right)^{1/2}\Delta^k & \TEXT{when }n=2k,\\
\left( \frac{1}{n!4^n\sigma^{2n}} \right)^{1/2}\Delta^k\nabla &
\TEXT{when }n=2k+1,
\end{array}
\right.\quad{} k\in\NN_0.
\end{equation}
Here we again use $\Delta^0:=I$. Now we will verify that
$\NativePhi$ is equivalent to $\Hp$. Even though we find that $\vP$
does \emph{not} satisfy the conditions of Theorem~\ref{t:RKHS-Hp},
we are still able to use other techniques in order to combine the results of
this paper to complete the proof.

Let $\vP_n:=\left( \vQ_0^T,\cdots,\vQ_n^T \right)^T$ and
$L_n:=\vP_n^{\ast T}\vP_n$ for each $n\in\NN$. We choose the Green
function $G_n$ with respect to $L_n$, which is the inverse
$\Lone$-Fourier transform of $(2\pi)^{-d/2}\hat{l}_n^{-1}$ when
$n>d/2$. Therefore $\vP_n$ and $G_n$ satisfy the conditions of
Theorem~\ref{t:RKHS-Hp}. This tells us that -- as in
Examples~\ref{ex:Sobolevspline} and~\ref{ex:Matern} --
$\mathrm{H}_{\vP_n}(\Rd)$ is equivalent to the classical Sobolev
space $\Hil^{n}(\Rd)$ for each $n\in\NN$. Theorem~\ref{t:RKHS-Hp}
further tells us that
\[
\Native_{G_n}(\Rd)\equiv\mathrm{H}_{\vP_n}(\Rd),\quad{}\TEXT{when
}n>d/2.
\]
Furthermore, we can verify that
\[
f\in\Hp \quad\Longleftrightarrow\quad
f\in\cap_{n=1}^{\infty}\mathrm{H}_{\vP_n}(\Rd)\TEXT{ and }
\sup_{n\in\NN}\norm{f}_{\mathrm{H}_{\vP_n}(\Rd)}<\infty,
\]
which implies that
$\norm{f}_{\mathrm{H}_{\vP_n}(\Rd)}\to\norm{f}_{\Hp}$ as
$n\to\infty$.

Let $f\in\Native_{\Phi}(\Rd)$ and $\hat{f}$ be the $\Ltwo$-Fourier
transform of $f$. We can check that
$\norm{\FvP_1(\vx)}_2^2\leq\cdots\leq\norm{\FvP_n(\vx)}_2^2
\leq\cdots\leq(2\pi)^{-d/2}\hat{\phi}(\vx)^{-1}$ and
$\norm{\FvP_n(\vx)}_2^2\to(2\pi)^{-d/2}\hat{\phi}(\vx)^{-1}$ as
$n\to\infty$. Hence, $\hat{\MATHCAL{G}}_n^{-1/2}\hat{f}\in\Ltwo$
which implies that $f\in\Native_{G_n}(\Rd)$ by
Theorem~\ref{t:RKHS-FT}. According to the Lebesgue monotone
convergence theorem~\cite{AdaFou03} and Theorem~\ref{t:RKHS-Hp}, we
have
\begin{eqnarray*}
&&\lim_{n\to\infty}\norm{f}_{\mathrm{H}_{\vP_n}(\Rd)}^2
=\lim_{n\to\infty}\norm{f}_{\Native_{G_n}(\Rd)}^2
=\lim_{n\to\infty}(2\pi)^{-d/2}\int_{\Rd}\frac{\abs{\hat{f}(\vx)}^2}{\hat{\MATHCAL{G}}_n(\vx)}\ud\vx\\
&=&\lim_{n\to\infty}\int_{\Rd}\abs{\hat{f}(\vx)}^2\hat{l}_n(\vx)\ud\vx
=\lim_{n\to\infty}\int_{\Rd}\abs{\hat{f}(\vx)}^2\norm{\FvP_n(\vx)}_2^2\ud\vx\\
&=&(2\pi)^{-d/2}\int_{\Rd}\frac{\abs{\hat{f}(\vx)}^2}{\hat{\phi}(\vx)}\ud\vx
=\norm{f}_{\NativePhi}^2<\infty.
\end{eqnarray*}
Therefore $f\in\Hp$ and $\norm{f}_{\NativePhi}=\norm{f}_{\Hp}$.

On the other hand, we fix any $f\in\Hp$. Then
$f\in\mathrm{H}_{\vP_n}(\Rd)$ for each $n\in\NN$. We again use the
Lebesgue monotone convergence theorem~\cite{AdaFou03} and
Theorem~\ref{t:RKHS-Hp} to show that
\begin{eqnarray*}
\int_{\Rd}\frac{\abs{\hat{f}(\vx)}^2}{\hat{\phi}(\vx)}\ud\vx
&=&(2\pi)^{d/2}\lim_{n\to\infty}\int_{\Rd}\abs{\hat{f}(\vx)}^2\norm{\FvP_n(\vx)}_2^2\ud\vx
=(2\pi)^{d/2}\lim_{n\to\infty}\int_{\Rd}\abs{\hat{f}(\vx)}^2\hat{l}_n(\vx)\ud\vx\\
&=&\lim_{n\to\infty}\int_{\Rd}\frac{\abs{\hat{f}(\vx)}^2}{\hat{\MATHCAL{G}}_n(\vx)}\ud\vx
=(2\pi)^{d/2}\lim_{n\to\infty}\norm{f}_{\Native_{G_n}(\Rd)}^2 \\
&=&(2\pi)^{d/2}\lim_{n\to\infty}\norm{f}_{\mathrm{H}_{\vP_n}(\Rd)}^2=(2\pi)^{d/2}\norm{f}_{\Hp}^2<\infty,
\end{eqnarray*}
which establishes that $\hat{\phi}^{-1/2}\hat{f}\in\Ltwo$, and therefore $f\in\NativePhi$.

Summarizing the above discussion, it follows that the
reproducing-kernel Hilbert space of the Gaussian kernel is given by
the generalized Sobolev space $\Hp$, i.e.,
\[
\NativePhi\equiv\Hp.
\]
\end{example}
%
\begin{remark}\label{Gaussian_Remark}
If $f\in\NativePhi\equiv\Hp$, then $f\in\Hil^{n}(\Rd)$ for each
$n\in\NN$. According to the Sobolev embedding
theorem~\cite{AdaFou03}, we have $\NativePhi\subseteq
\Cont^{\infty}_b(\Rd)$. On the other hand, if a function
$f\in\Real(\Cont^{\infty}_b(\Rd))$ satisfies
$\norm{D^{\alpha}f}_{\Leb_{\infty}(\Rd)}\leq C^{\abs{\alpha}}$ for
some positive constant $C$ and each $\alpha\in\NN_0^d$, then
$f\in\Hp\equiv\NativePhi$. Moreover, if we replace the test
functions space to be $\MATHSCR{D}(\Rd)$, then we can further think
of the Gaussian function $\Phi$ is a (full-space) Green function of
$L:=\exp(-\frac{1}{4\sigma^2}\Delta)$, i.e.,
\[
L\Phi=\delta_0\quad{}\text{and}\quad{}\Phi,\delta_0\in\MATHSCR{D}(\Rd)'.
\]
\end{remark}

%
%
\section{Extensions and Future Works}\label{s:closing}

In another paper~\cite{FasYe10}, we generalize the results of this
paper in several ways.

Instead of being limited to differential operators we allow general
distributional operators which, e.g., are allowed to be differential
operators with non-constant coefficients. In that case the
(full-space) Green functions and the generalized Sobolev spaces can
be constructed by the vector distributional operators in a similar
way.

In addition, we extend all the results from positive definite
functions and their reproducing-kernel Hilbert space with respect to
a vector differential operators to conditionally positive definite
functions of some orders and their native space with respect to
vector distributional operators. In this case the real generalized
Sobolev space will be rewritten as the following from
\[
\Hp:= \Big\{ f\in\RealLloc\cap\SI:\left\{P_n
f\right\}_{n=1}^{\infty}\subseteq\Ltwo\TEXT{ and }
\sum_{n=1}^{\infty}\norm{P_nf}_{\Ltwo}^2<\infty \Big\}
\]
and it is equipped with the semi-inner product
\[
(f,g)_{\Hp}:=\sum_{n=1}^{\infty}(P_n f,\overline{P_n g}), \quad{
}f,g\in\Hp,
\]
where $\vP:=\left(P_1,\cdots,P_n,\cdots\right)^T$ is a vector
distributional operator (see~\cite{FasYe10}).

For example, if $\vP:=\left(\omega_{\tau}\partial^m/\partial x_1^m,
\cdots, \omega_{\tau} D^{\alpha}, \cdots,
\omega_{\tau}\partial^m/\partial x_d^m \right)^T$ which is set up by
the differential operators (with the non-constant coefficients),
then
\[
\Hp:=\left\{f\in\RealLloc\cap\SI:\omega_{\tau}
D^{\alpha}f\in\Ltwo,\abs{\alpha}= m,\alpha\in\NN_0^d\right\},
\]
where $\omega_{\tau}(\vx):=\norm{\vx}_2^{\tau}$ and $0\leq\tau<1$.

In the work presented here and in \cite{FasYe10} we do not specify
any boundary conditions for the Green functions. Thus there may be
many different choices for the Green function with respect to one
and the same differential operator $L$. In our future work we will
apply a vector differential operator
$\vP:=\left(P_1,\cdots,P_n\right)^T$ and a vector boundary operator
$\vB:=(B_1,\cdots,B_s)^T$ on a bounded domain $\Omega$ to construct
a reproducing kernel and its related reproducing-kernel Hilbert
space (see~\cite{FasYe10b}). We further hope to use the
distributional operator $L$ to approximate the eigenvalues and
eigenfunctions of the kernel function with the hope of obtaining
fast numerical methods to solve the interpolating
systems~(\ref{interpolant})-(\ref{interpolation_conditions}) similar
as in \cite[Chapter 15]{Wen05}.

Finally, we only consider real-valued functions for the definition
of our generalized Sobolev spaces and their Green functions in this
paper. However, all conclusions and all theorems can be extended to
complex-valued functions similar as was done in \cite{Wen05}.



Thank you to Dr. G. E. Fasshauer, my advisor, who gave me guidance
and support throughout the entire paper process.

\end{document}